\documentclass[11pt,a4paper]{amsart}
\usepackage{amsmath,amssymb,epsfig}

\usepackage{algorithm}
\usepackage{algpseudocode}
\usepackage{algorithmicx}

\usepackage{url}
\usepackage{array}

\usepackage{color}
\usepackage{gauss}

\usepackage{graphicx}
\usepackage{epsfig}
\usepackage{subfig}
\usepackage{grffile}

\usepackage{tikz}
\usepackage[vlined,lines numbered,ruled,algo2e]{algorithm2e}

\usepackage{amsthm}
\newtheorem{theorem}{Theorem}[section]
\newtheorem{lemma}[theorem]{Lemma}

\newtheorem{corollary}[theorem]{Corollary}

\newcommand{\C}{{\mathbb C}}

\newcommand{\calK}{{\mathcal K}}

\usepackage{amsaddr}

\title[A bivariate extension of the Crouzeix-Palencia result]{A bivariate extension of the Crouzeix-Palencia result with an application to Fr\'echet derivatives of matrix functions}

\author[M. Crouzeix \and D. Kressner]{Michel Crouzeix}

\address{Universit\'e de Rennes 1, CNRS, IRMAR - UMR 6625, F-35000 Rennes, France}

\author[]{Daniel Kressner}

\address{\'Ecole polytechnique f\'ed\'erale de Lausanne, Institute of Mathematics, CH-1015 Lausanne, Switzerland. E-mail: daniel.kressner@epfl.ch.}

\date{\today}

\keywords{numerical range, matrix function, multivariate function, Fr\'echet derivative, Krylov subspaces}

\subjclass[2010]{15A60; 15A16, 65F60}

\begin{document}

\maketitle

\begin{abstract}
A result by Crouzeix and Palencia states that the spectral norm of a matrix function $f(A)$ is bounded by $K = 1+\sqrt{2}$ times the maximum of $f$ on $W(A)$, the numerical range of $A$. The purpose of this work is to point out that this result extends to a certain notion of bivariate matrix functions; the spectral norm of $f\{A,B\}$ 
is bounded by $K^2$ times the maximum of $f$ on $W(A)\times W(B)$. As a special case, it follows that the spectral norm of the Fr\'echet derivative of $f(A)$ is bounded by 
$K^2$ times the maximum of $f^\prime$ on $W(A)$. An application to the convergence analysis of certain Krylov subspace methods and the extension to functions in more than two variables are discussed. 
\end{abstract}

\section{Introduction}

The numerical range of a matrix $A\in \C^{n \times n}$ is the set 
\[
 W(A) := \{ v^* A v:\, v \in \C^n, \|v\|_2 = 1\},
\]
where we let $\|\cdot\|_2$ denote the Euclidean norm of a vector and the spectral norm of a matrix.
Consider the matrix function $f(A)$ for an analytic function $f: \Omega \subset \C \to \C$ with $W(A) \subset \Omega$.
A result by Crouzeix and Palencia~\cite{CrouzeixPalencia2017} shows that
\begin{equation} \label{eq:scalarcrouzeixpalencia}
 \|f(A)\|_2 \le (1+\sqrt{2}) \max_{z \in W(A)} |f(z)|.
\end{equation}
The purpose of this work is to point out that an analogous result holds for a certain notion of multivariate matrix functions.

To provide some intuition on the multivariate matrix functions considered in this work, let us first consider the bivariate polynomial $p(x,y) = 1 + xy + x^3 y^2$. For matrices $A \in \C^{m\times m}$ and 
$B \in \C^{n\times n}$, evaluating $p$ in the commuting matrices 
$I\otimes A$, $B \otimes I$ gives
\[
 p\{A,B\} := I + B \otimes A + B^2 \otimes A^3 \in \C^{mn\times mn},
\]
where $\otimes$ denotes the usual Kronecker product.
Equivalently, $p\{A,B\}$ can be viewed as the linear operator on $\C^{m \times n}$ defined
by $p\{A,B\}: X\mapsto X + A X B^T + A^3 X (B^T)^2$, where $B^T$ is the complex transpose of $B$. For a general bivariate function $f$ analytic on a domain $\Omega \subset \C \times \C$ containing the Cartesian product of the eigenvalues of $A$ and $B$, we define
\begin{equation} \label{def:bivariatefunction}
  f\{A,B\} := -\frac{1}{4\pi^2} \oint_{\Gamma_A} \oint_{\Gamma_B} f(x,y) (yI-B)^{-1} \otimes (xI-A)^{-1}\,\text{d}y\,\text{d}x,
\end{equation}
for closed contours $\Gamma_A$ and $\Gamma_B$ enclosing the eigenvalues of $A$ and $B$, respectively, and satisfying $\Gamma_A \times \Gamma_B \subset \Omega$. As explained in~\cite{Kressner2014bivariate}, this definition represents a special case of the well established notion of evaluating a multivariate holomorphic function in elements from a commutative Banach algebra; see, e.g.,~\cite{Biller2007} for an introduction. The definition~\eqref{def:bivariatefunction} is also closely related to the notion of double operator integrals~\cite{Skripka2020}.

Assuming additionally that $f$ is analytic in a domain containing  $W(A)\times W(B)$, our main result (see Theorem~\ref{theorem:main}) states that
\begin{equation} \label{eq:newresult}
  \|f\{A,B\}\|_2 \le (1+\sqrt{2})^2\, \|f\|_{W(A)\times W(B)},
\end{equation}
where $\|f\|_{W(A)\times W(B)}$ denotes the maximum of $|f|$ on $W(A) \times W(B)$.
The constant $(1+\sqrt{2})^2$ in~\eqref{eq:newresult} is worse than the one in~\eqref{eq:scalarcrouzeixpalencia} but it is in fact not possible to reduce the constant in~\eqref{eq:newresult} to $1+\sqrt{2}$. To see this, let $A = B = \begin{bmatrix} 0 & 1 \\ 0 & 0 \end{bmatrix}$ and $f(x,y) := xy$. Because the numerical range of $A$ is a disc centered at zero with radius $1/2$, we have
$\|f\|_{W(A)\times W(B)} = 1/4$.
On the other hand, $\|f\{A,B\}\|_2 = \|B\otimes A\|_2 = 1$, which shows that the constant in~\eqref{eq:newresult} must be at least $4$. This is entirely analogous to the univariate bound~\eqref{eq:scalarcrouzeixpalencia} for which the constant is known to be at least $2$ and, in fact, conjectured to be exactly $2$; see~\cite{Cadwell2018,Glader2018,Greenbaum2018,Ransford2018} for recent progress in this direction.

In Section~\ref{sec:multivariate}, we will see that the result~\eqref{eq:newresult} easily extends to functions in more than two variables; with a constant that (necessarily) grows exponentially with the number of variables.

Our result~\eqref{eq:newresult} has important implications in a variety of applications, including norm estimates for derivatives of matrix functions and the convergence analysis of certain Krylov subspace methods; see Sections~\ref{sec:derivatives} and~\ref{sec:krylov}, respectively.

\subsubsection*{Related work}
When $f(x,y) = g(x+y)$ for some univariate function $g$, a result by Starke~\cite[Corollary 3.2]{Starke1993} combined with~\eqref{eq:scalarcrouzeixpalencia} implies~\eqref{eq:newresult}, in fact with the lower constant $1+\sqrt{2}$; see also~\cite[Remark 1]{Kressner2019bivariate}. Existing results for general functions include work by Gil', such as~\cite[Theorem 1.1]{Gil2011}, and~\cite[Lemma 3]{Kressner2019bivariate}. These bounds are significantly more complicated than~\eqref{eq:newresult} and depend on additional quantities, such as the distance to normality of $A$ and/or $B$.

It is simple to see that~\eqref{eq:newresult} holds with constant $1$ when $A,B$ are both normal. This becomes more subtle when replacing the spectral norm by other Schatten norms, a question that has been studied extensively in the literature on double and multiple operator integrals~\cite{Skripka2019}.

The von Neumann inequality is a variant of~\eqref{eq:scalarcrouzeixpalencia}, which states that $\|f(A)\|_2 \le \|f\|_{\mathbb D}$ for $\|A\|_2 \le 1$, assuming that $f$ is analytic on the open unit disk $\mathbb D$ and continuous on $\overline{\mathbb D}$. This result has been extended in various ways to multivariate functions; see~\cite[Sec. 37.4]{BadeaBeckermann2013} for a survey. In particular, applying the seminal result by Ando~\cite{Ando1963} to the commuting matrices 
$I\otimes A$, $B \otimes I$ one obtains
\begin{equation} \label{eq:ando}
 \|f\{A,B\}\|_2 \le \|f\|_{\mathbb D \times \mathbb D}, \quad \text{if $\|A\|_2 \le 1$ and $\|B\|_2 \le 1$.}
\end{equation}
Let us note that such a result also holds for functions in more than two variables, in the sense defined in Section~\ref{sec:multivariate}, because the involved matrices are doubly commuting~\cite[Sec. 1.5.9 (g)]{Nikolski2002}.

\section{Norm estimates for matrix-valued mappings}

Our proof of~\eqref{eq:newresult} is based on the matrix-valued version of~\eqref{eq:scalarcrouzeixpalencia}, which is equivalent to stating that $W(A)$ is not only a $(1+\sqrt{2})$-spectral set but in fact a \emph{complete} $(1+\sqrt{2})$-spectral set for $A$. The existence of such a matrix-valued version is stated without proof in~\cite{CrouzeixPalencia2017}. Given the centrality of this result in our derivation, we feel it worthwhile to include a detailed proof.

Consider a smooth, bounded, convex domain $\Omega \subset \mathbb C$ and a matrix-valued function $F: \Omega \to \C^{m\times p}$ that is (element-wise) analytic in $\Omega$ and admits a continuous extension to $\overline{\Omega}$.
We will work with the maximum of the matrix $2$-norm:
\[
 \|F\|_\Omega := \sup_{z \in \Omega} \| F(z) \|_2.
\]
The function defined by
\[
 G(z) = \frac{1}{2\pi \mathrm{i}} \oint_{\partial \Omega} F^*(\sigma) \frac{\mathrm{d}\sigma}{\sigma-z}
\]
is clearly analytic in $\Omega$ and, by applying~\cite[Lemma 2.1]{CrouzeixPalencia2017} to each entry of $G$, it also admits a continuous extension to $\overline{\Omega}$.
\begin{lemma} \label{lemma:1}
 For the function $G$ defined above, it holds that $\|G\|_\Omega \le \|F\|_\Omega$.
\end{lemma}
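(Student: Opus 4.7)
My plan is to reduce the matrix-valued bound to its scalar analog, which is the key Cauchy-transform estimate underpinning the Crouzeix--Palencia $(1+\sqrt{2})$ result. The reduction exploits the duality characterization of the spectral norm, $\|M\|_2 = \sup\{|v^* M u| : \|u\|_2 = \|v\|_2 = 1\}$, together with the linearity of the defining contour integral.

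Concretely, I would fix $z \in \Omega$ and arbitrary unit vectors $u \in \C^m$, $v \in \C^p$. Pulling the inner product inside the contour integral gives
\[
v^* G(z) u \,=\, \frac{1}{2\pi \mathrm{i}} \oint_{\partial\Omega} v^* F^*(\sigma) u \,\frac{\mathrm{d}\sigma}{\sigma - z} \,=\, \frac{1}{2\pi \mathrm{i}} \oint_{\partial\Omega} \overline{h(\sigma)} \,\frac{\mathrm{d}\sigma}{\sigma - z},
\]
where the scalar function $h(\sigma) := u^* F(\sigma) v$ is analytic on $\Omega$, continuous on $\overline{\Omega}$, and satisfies the pointwise bound $|h(\sigma)| \le \|F(\sigma)\|_2 \le \|F\|_\Omega$ by Cauchy--Schwarz. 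Thus the right-hand side is exactly the scalar Cauchy transform of $\overline{h}$ appearing in the Crouzeix--Palencia analysis.

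At this point I would invoke the scalar estimate from \cite{CrouzeixPalencia2017}, which uses the convexity of $\Omega$ to show that this Cauchy transform is bounded in modulus by $\|h\|_\Omega$. Together with the pointwise bound on $h$, this yields $|v^* G(z) u| \le \|F\|_\Omega$. Taking the supremum over unit $u,v$ gives $\|G(z)\|_2 \le \|F\|_\Omega$, and a final supremum over $z \in \Omega$ delivers the lemma.

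The ``hard part'' is not in this lemma itself but already packaged inside the scalar Crouzeix--Palencia estimate, where convexity of $\Omega$ plays the essential role. The only thing to verify carefully in the reduction is that $h$ genuinely inherits from $F$ the three properties required to apply the scalar result (analyticity on $\Omega$, continuity on $\overline{\Omega}$, and the pointwise norm bound), which is immediate. In particular, no new analytic input beyond the scalar case is needed, so the argument is essentially a clean duality step.
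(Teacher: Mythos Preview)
Your proposal is correct and follows essentially the same route as the paper: both arguments compress $G$ against unit vectors to reduce to the scalar Cauchy-transform estimate of \cite[Lemma~2.1]{CrouzeixPalencia2017}, then take suprema. The only cosmetic difference is notation (you name the scalar function $h$ explicitly), and your dimension bookkeeping is in fact cleaner than the paper's.
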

\begin{proof}
Given arbitrary vectors $u\in \C^m$, $v\in \C^p$ with $\|u\|_2 = \|v\|_2 = 1$, we can apply~\cite[Lemma 2.1]{CrouzeixPalencia2017} to the scalar functions \[
u^* G(z) v = \frac{1}{2\pi \mathrm{i}} \oint_{\partial \Omega} \overline{v^* F(\sigma) u} \frac{\,\mathrm{d}\sigma}{\sigma-z}
\]
and $v^* F(z) u$ to obtain $|u^* G(z) v| \le \| v^* F(\cdot) u \|_\Omega \le \|F\|_\Omega$. By taking the supremum over all such $u,v$ we obtain
$\|G(z)\|_2 \le \|F\|_\Omega$.
\end{proof}

For a matrix $A \in \C^{n\times n}$ with eigenvalues contained in $\Omega$, we define $F(A)$ by replacing the function $f_{ij}(z)$ at each entry $(i,j)$ of $F(z)$ by the corresponding matrix function $f_{ij}(A)$:
\[
 F(A) = \begin{bmatrix}
f_{11}(A) & \cdots & f_{1p}(A) \\
\vdots & & \vdots \\
f_{m1}(A) & \cdots & f_{mp}(A)
\end{bmatrix}.
\]
Using the Cauchy integral formula and the Kronecker product $\otimes$, we can write
\[
 F(A) = \frac{1}{2\pi \mathrm{i}} \oint_{\partial \Omega} F(\sigma) \otimes (\sigma I -A)^{-1} \,\mathrm{d}\sigma.
\]
and, in turn,
\[
 G(A) = \frac{1}{2\pi \mathrm{i}} \oint_{\partial \Omega} F(\sigma)^* \otimes (\sigma I -A)^{-1} \,\mathrm{d}\sigma.
\]
\begin{lemma} \label{lemma:2} Assume that $W(A)\subset \Omega$. Then the matrices $F(A)$, $G(A)$ defined above satisfy $\|F(A)+G(A)^*\|_2 \le 2 \|F\|_\Omega$.
\end{lemma}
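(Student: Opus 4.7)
The plan is to adapt the scalar Crouzeix--Palencia argument (the bound preceding Theorem~1.1 in~\cite{CrouzeixPalencia2017}) to the Kronecker-product setting. I would first parametrize $\partial\Omega$ by arclength $s\in[0,L]$, write $n(s)$ for the outward unit normal, and record $d\sigma = i n(s)\,ds$ on the positively oriented boundary. Taking the adjoint of the integral defining $G(A)$ via $(X\otimes Y)^* = X^*\otimes Y^*$ and adding to the integral representation of $F(A)$ produces the single representation
\[
  F(A) + G(A)^* = \frac{1}{2\pi}\int_0^L F(\sigma(s))\otimes\bigl[H(s)+H(s)^*\bigr]\,ds,\qquad H(s) := n(s)\bigl(\sigma(s) I - A\bigr)^{-1}.
\]

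Next I would establish two properties of the Hermitian matrix $H(s)+H(s)^*$. First, positivity $H(s)+H(s)^*\succeq 0$: for a unit vector $w\in\C^n$, substituting $q:=(\sigma(s) I - A)^{-1}w$ and $\beta := q^*A^*q/\|q\|_2^2$ reduces the quadratic form to
\[
  w^*\bigl(H(s)+H(s)^*\bigr)w = 2\|q\|_2^2\,\Re\bigl[n(s)\,\overline{\sigma(s)-\overline{\beta}}\bigr],
\]
and since $\overline{\beta}\in W(A)\subset\Omega$, this is non-negative by the supporting hyperplane property of the convex domain $\Omega$ at the boundary point $\sigma(s)$. Second, the integral identity
\[
  \frac{1}{2\pi}\int_0^L\bigl[H(s)+H(s)^*\bigr]\,ds \;=\; 2 I_n,
\]
which follows from $n(s)\,ds = -i\,d\sigma$ and Cauchy's formula (giving $\int_0^L H(s)\,ds = 2\pi I_n$) together with its adjoint.

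Finally, I would combine these two facts through a double Cauchy--Schwarz argument. Setting $Q(s) := (H(s)+H(s)^*)^{1/2}\succeq 0$, the mixed-product identity $(I_m\otimes Q)(F(\sigma)\otimes I_n)(I_p\otimes Q) = F(\sigma)\otimes Q^2$ lets me rewrite, for unit vectors $u\in\C^{mn},\,v\in\C^{pn}$,
\[
  u^*\bigl[F(\sigma)\otimes(H+H^*)\bigr]v = \tilde u(s)^*\,\bigl(F(\sigma)\otimes I_n\bigr)\,\tilde v(s),
\]
with $\tilde u(s) := (I_m\otimes Q(s))u$ and $\tilde v(s) := (I_p\otimes Q(s))v$. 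A pointwise Cauchy--Schwarz together with $\|F(\sigma)\otimes I_n\|_2 = \|F(\sigma)\|_2$ gives the bound $\|F\|_\Omega\|\tilde u(s)\|_2\|\tilde v(s)\|_2$ on the integrand, and then an integral Cauchy--Schwarz combined with the identity $\int_0^L\|\tilde u(s)\|_2^2\,ds = u^*(I_m\otimes\int_0^L Q^2\,ds)u = 4\pi$ (and the analogue for $\tilde v$) yields $|u^*(F(A)+G(A)^*)v|\le 2\|F\|_\Omega$. Taking the supremum over $u,v$ completes the proof.

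The main obstacle is that, in the scalar case, $H+H^*$ is simply a non-negative real scalar so that the pointwise absolute value passes trivially through the arclength integral; in the matrix-valued setting this step fails, and I see no alternative but to factor $H+H^*$ through its square root $Q(s)$ and invoke Cauchy--Schwarz twice, which reduces the operator-norm estimate to the trace-type identity for $\int_0^L Q(s)^2\,ds$ supplied by the second property above.
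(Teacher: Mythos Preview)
Your proof is correct and follows essentially the same route as the paper: both represent $F(A)+G(A)^*$ as an arclength integral of $F(\sigma)\otimes\mu(\sigma,A)$ with $\mu=\frac{1}{2\pi}(H+H^*)$ Hermitian positive semidefinite and integrating to $2I$, and then apply Cauchy--Schwarz pointwise and again in the integral. The only cosmetic difference is that you factor through the square root $Q=(H+H^*)^{1/2}$ and invoke $\|F(\sigma)\otimes I_n\|_2=\|F(\sigma)\|_2$, whereas the paper works in the vectorized/Frobenius picture and uses the trace inequality $\text{trace}\big(X^*\mu X\,\overline{F^*F}\big)\le\|F\|_2^2\,\text{trace}(X^*\mu X)$; the two manipulations are equivalent.
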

\begin{proof}
Without loss of generality, we may assume $\|F\|_\Omega = 1$.
Set $S:=F(A)+G(A)^*$. Using a counterclockwise oriented arclength parametrization $\sigma = \sigma(s)$ of $\Omega$, we obtain 
\begin{align*}
 S & = \frac{1}{2\pi \mathrm{i}} \oint_{\partial \Omega} F(\sigma) \otimes \big[ (\sigma I -A)^{-1} \,\mathrm{d}\sigma + (\bar \sigma I -A^*)^{-1} \,\mathrm{d}\bar \sigma \big]  \\
 &= 
 \oint_{\partial \Omega} F(\sigma) \otimes \mu(\sigma,A)\,\mathrm{d}s,
\end{align*}
where $\mu(\sigma,A) = \frac{1}{2\pi}\big(\nu (\sigma I - A)^{-1} + \overline \nu (\bar \sigma I -A^*)^{-1}\big)$ and $\nu$ denotes the unit outward normal vector of $\partial \Omega$ at $\sigma$. As discussed in~\cite[Sec. 2]{CrouzeixPalencia2017}, the matrix $\mu(\sigma,A)$ is Hermitian positive definite and satisfies
\[
 \oint_{\partial \Omega} \mu(\sigma,A)\,\mathrm{d}s = 2I.
\]
For matrices $X \in \C^{n\times p}$, $Y \in \C^{n\times m}$, we set $x = \text{vec}(X)$, $y = \text{vec}(Y)$, let $\langle\cdot,\cdot\rangle_F$ denote the Frobenius inner product of matrices and derive
 \begin{eqnarray*}
 | \langle Sx,y\rangle | &=&  \Big| \oint_{\partial \Omega} \big\langle \mu(\sigma,A)XF(\sigma)^T,Y \big\rangle_F \,\mathrm{d}s \Big| \le 
 \oint_{\partial \Omega} \big| \big\langle \mu(\sigma,A)XF(\sigma)^T,Y \big\rangle_F \big| \,\mathrm{d}s \\
 &\le& \oint_{\partial \Omega} \big\langle \mu(\sigma,A)X F(\sigma)^T,XF(\sigma)^T \big\rangle_F^{1/2}\, \big\langle \mu(\sigma,A)Y,Y \big\rangle_F^{1/2}\,\mathrm{d}s
 \end{eqnarray*}
where we used the Cauchy-Schwarz inequality in the inner product $\langle \mu(\sigma,A) \cdot,\cdot\rangle_F$. Combined with
\begin{align*}
\langle \mu(\sigma,A)XF(\sigma)^T,XF(\sigma)^T \big\rangle_F &=  
 \text{trace}\big( \mu(\sigma,A) X F(\sigma)^T \overline{F(\sigma)} X^* \big) \\ &= 
 \text{trace}\big( X^* \mu(\sigma,A) X \overline{F(\sigma)^* F(\sigma)} \big) \\
 &\le \lambda_{\max}( F(\sigma)^* F(\sigma) )\,\text{trace}\big( X^* \mu(\sigma,A) X  \big) \\ &\le \text{trace}\big( X^* \mu(\sigma,A) X  \big),
\end{align*}
we obtain
 \begin{eqnarray*}
 | \langle Sx,y\rangle | &\le&  
 \oint_{\partial \Omega} \big\langle \mu(\sigma,A)X,X \big\rangle_F^{1/2}\, \big\langle \mu(\sigma,A)Y,Y \big\rangle_F^{1/2}\,\mathrm{d}s \\
 &\le & \Big( \oint_{\partial \Omega} \big\langle \mu(\sigma,A)X,X \big\rangle_F\,\mathrm{d}s \Big)^{1/2}
 \Big( \oint_{\partial \Omega} \big\langle \mu(\sigma,A)Y,Y \big\rangle_F\,\mathrm{d}s \Big)^{1/2} \\
 &=& \Big( \Big\langle \oint_{\partial \Omega}  \mu(\sigma,A)\,\mathrm{d}s\, X,X \Big\rangle_F \Big)^{1/2}
 \Big( \Big\langle \oint_{\partial \Omega}  \mu(\sigma,A)\,\mathrm{d}s\, Y,Y \Big\rangle_F\,\mathrm{d}s \Big)^{1/2} \\
 &=& 2 \|X\|_F \|Y\|_F = 2\|x\|_2\|y\|_2. 
\end{eqnarray*}
This proves $\|S\|_2\le 2$.
\end{proof}

\begin{theorem} \label{theorem:crouzeixpalencia}
Assume that $W(A)\subset \Omega$. Then 
\[
\|F(A)\|_2 \le (1+\sqrt{2}) \|F\|_\Omega.
\]
\end{theorem}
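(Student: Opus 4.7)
Without loss of generality, normalize $\|F\|_\Omega = 1$; the goal is to prove $\|F(A)\|_2 \le 1+\sqrt 2$. The starting point is Lemma~\ref{lemma:2}, which already gives
\[
\|F(A) + G(A)^*\|_2 \le 2,
\]
so by the triangle inequality the task reduces to controlling $\|G(A)^*\|_2 = \|G(A)\|_2$. A direct appeal to the very theorem being proved, using $\|G\|_\Omega \le 1$ from Lemma~\ref{lemma:1}, would be circular, so I must avoid this.

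To break the circularity I iterate the construction. Note that $G : \Omega \to \C^{p\times m}$ is itself an analytic matrix-valued function, so I may define
\[
H(z) := \frac{1}{2\pi\mathrm i}\oint_{\partial\Omega} G^*(\sigma)\,\frac{\mathrm d\sigma}{\sigma-z}.
\]
Lemma~\ref{lemma:1} applied to $G$ yields $\|H\|_\Omega \le \|G\|_\Omega \le 1$, and Lemma~\ref{lemma:2} applied to $G$ (recall $W(A)\subset \Omega$ is unchanged) yields $\|G(A) + H(A)^*\|_2 \le 2$. Thus $H(A)$ sits inside the same estimate as $F(A)$, one level deeper in a self-similar recursion $F \to G \to H \to \cdots$.

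The heart of the argument is then to combine the two ``$\le 2$'' estimates with the norm bound $\|H\|_\Omega \le 1$ to produce the sharper constant $1+\sqrt 2$, following the original Crouzeix--Palencia strategy. I would play the two estimates against each other in a Gram / block-matrix manner, exploiting the positive-semidefinite measure $\mu(\sigma,A)$ from the proof of Lemma~\ref{lemma:2} through an operator Cauchy--Schwarz inequality applied to the stacked function $\bigl(\begin{smallmatrix} F \\ G \end{smallmatrix}\bigr)$ (or, equivalently, iterating the identity $F(A) = (F(A)+G(A)^*) - (G(A)+H(A)^*)^* + H(A)$).

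The main obstacle is precisely this final combination. A naive triangle inequality produces only the vacuous $K \le 2 + K$ for $K = \|F(A)\|_2$, so the recursive structure and the semidefinite identity $\oint \mu(\sigma,A)\,\mathrm ds = 2I$ must be used to extract a quadratic constraint of the form $K^2 - 2K - 1 \le 0$, whose positive root $1+\sqrt 2$ is the desired bound. Everything else in the proof is bookkeeping of adjoints and block shapes.
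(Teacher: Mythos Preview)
Your overall plan matches the paper's: both reduce the theorem to Lemmas~\ref{lemma:1} and~\ref{lemma:2} plus a combination step, and the paper simply defers that step to~\cite[Sec.~2]{Ransford2018}. The gap in your proposal is precisely that combination step, which you yourself flag as ``the main obstacle'' but do not carry out. None of the mechanisms you list actually closes the argument. The telescoping identity $F(A)=(F(A)+G(A)^*)-(G(A)+H(A)^*)^*+H(A)$ together with the triangle inequality yields only $\|F(A)\|\le 4+\|H(A)\|$, hence $K\le 4+K$, and iterating further gives $K\le 2n+K$ for every $n$, never a closed inequality. The phrase ``operator Cauchy--Schwarz applied to the stacked function $\bigl(\begin{smallmatrix}F\\G\end{smallmatrix}\bigr)$'' is too vague to evaluate; you do not say what inequality you intend to extract from $\oint\mu\,\mathrm ds=2I$, and the obvious manipulations of this type produce $K^2\le 2K+K$ (so $K\le 3$) or $K^2\le 4+2K$ (so $K\le 1+\sqrt5$), not the target $K^2\le 2K+1$. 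In short, you have named the correct target inequality but not supplied a route to it, and since this step is the entire content of the theorem beyond the two lemmas, the proposal is incomplete.

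One further remark: the iteration $F\to G\to H$ is not how the cited argument proceeds. The derivation in~\cite{Ransford2018} exploits the algebra structure---specifically that $G(A)F(A)=(GF)(A)$, which holds in the matrix-valued setting as well because the scalar entries $g_{ij}(A)$ and $f_{jk}(A)$ all commute. Writing $F(A)^*F(A)=(F(A)+G(A)^*)^*F(A)-G(A)F(A)$ gives, for every unit vector $v$, the bound $\|F(A)v\|^2\le 2\|F(A)v\|+|\langle (GF)(A)v,v\rangle|$, and the point is then to bound the \emph{numerical radius} of $(GF)(A)$ by $\|GF\|_\Omega\le 1$ rather than its norm by $K$. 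Your recursion to $H$ never brings this multiplicative structure or the numerical-radius refinement into play, which is why it cannot distinguish $1+\sqrt2$ from the cruder constants.
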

\begin{proof}
The result follows from Lemma~\ref{lemma:1} and Lemma~\ref{lemma:2} in a manner entirely analogous to the derivation in~\cite[Sec. 2]{Ransford2018}. 
\end{proof}

\section{Norm estimate for bivariate matrix functions}

We now extend Theorem~\ref{theorem:crouzeixpalencia} to the bivariate case. For smooth, bounded, and convex domains $\Omega_A,\Omega_B \subset \C$ we consider a matrix valued function $F:\Omega_A\times\Omega_B\to \C^{m\times p}$ that is analytic in $\Omega_A\times \Omega_B$ and continuous on $\overline{\Omega}_A \times \overline{\Omega}_B$. For matrices $A \in\C^{n_A\times n_A}$ and $B \in \C^{n_B\times n_B}$ with the numerical ranges contained in $\Omega_A$ and $\Omega_B$, respectively, we define
\begin{equation} \label{eq:bivariatematrixvalued}
 F\{A,B\} := -\frac{1}{4\pi^2} \oint_{\partial \Omega_A} \oint_{\partial \Omega_B} F(x,y) \otimes (yI-B)^{-1} \otimes (xI-A)^{-1}\,\text{d}y\,\text{d}x.
\end{equation}
This includes~\eqref{def:bivariatefunction} as a special case for $m = p = 1$.
%In the following theorem, we define $\|F\|_{\Omega_A\times\Omega_B}:=\sup\{\|F(x,y)\|_2:\, x\in \Omega_A, y\in \Omega_B\}$.

\begin{theorem}  \label{theorem:main}
For the matrix $F\{A,B\}$ defined above it holds that
\[
 \|F\{A,B\}\|_2 \le (1+\sqrt{2})^2 \|F\|_{W(A)\times W(B)},
\]
\end{theorem}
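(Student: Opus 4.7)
The plan is to derive Theorem~\ref{theorem:main} by an iterated application of the matrix-valued Crouzeix--Palencia inequality (Theorem~\ref{theorem:crouzeixpalencia}), treating one variable at a time. The key observation is that if we freeze $x$, the inner integral in~\eqref{eq:bivariatematrixvalued} evaluates the matrix-valued function $F(x,\cdot)$ at $B$ in exactly the sense covered by Theorem~\ref{theorem:crouzeixpalencia}; after this reduction we are left with a matrix-valued function of $x$ evaluated at $A$ in the same sense. Each reduction costs a factor of $1+\sqrt{2}$, which explains the constant.

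Concretely, for $x\in\Omega_A$ I would set
\[
 H(x) := \frac{1}{2\pi\mathrm{i}} \oint_{\partial\Omega_B} F(x,y)\otimes (yI-B)^{-1}\,\mathrm{d}y \;\in\; \C^{mn_B\times pn_B}.
\]
Joint analyticity of $F$ on $\Omega_A\times\Omega_B$ together with continuity on the closure imply, by standard differentiation under the integral sign, that $H$ is analytic on $\Omega_A$ and extends continuously to $\overline{\Omega}_A$; hence $H$ is admissible as input to Theorem~\ref{theorem:crouzeixpalencia}. Using Fubini and associativity of the Kronecker product, the definition~\eqref{eq:bivariatematrixvalued} rewrites as
\[
 F\{A,B\} \;=\; \frac{1}{2\pi\mathrm{i}} \oint_{\partial\Omega_A} H(x) \otimes (xI-A)^{-1}\,\mathrm{d}x,
\]
which is precisely $H(A)$ in the notation of Section~2. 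A first application of Theorem~\ref{theorem:crouzeixpalencia} (to $H$ and $A$, with $W(A)\subset\Omega_A$) then gives
\[
 \|F\{A,B\}\|_2 \;=\; \|H(A)\|_2 \;\le\; (1+\sqrt{2})\,\sup_{x\in\Omega_A}\|H(x)\|_2,
\]
while a second application (to $F(x,\cdot)$ and $B$, for each fixed $x$, with $W(B)\subset\Omega_B$) gives $\|H(x)\|_2\le (1+\sqrt{2})\sup_{y\in\Omega_B}\|F(x,y)\|_2$. Composing these bounds yields $\|F\{A,B\}\|_2 \le (1+\sqrt{2})^2\,\|F\|_{\Omega_A\times\Omega_B}$.

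To replace $\Omega_A\times\Omega_B$ by $W(A)\times W(B)$ in the right-hand side, I would use a standard approximation: $W(A)$ and $W(B)$ are compact and convex by the Toeplitz--Hausdorff theorem, so they may be written as the intersection of decreasing families of smooth, bounded, convex domains $\Omega_A\supset W(A)$ and $\Omega_B\supset W(B)$; continuity of $F$ on $\overline{\Omega_A}\times \overline{\Omega_B}$ then lets us pass to the limit. I expect the main technical nuisances to be the bookkeeping of the Kronecker structure that identifies the iterated integral with $H(A)$, and verifying analyticity and continuous boundary extension of $H$; both are routine. No estimate beyond Theorem~\ref{theorem:crouzeixpalencia} is required, and the constant $(1+\sqrt{2})^2$ is an unavoidable consequence of applying the univariate bound in cascade.
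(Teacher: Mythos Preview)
Your proposal is correct and follows essentially the same approach as the paper: freeze $x$, recognize the inner integral as $F_x(B)$ (your $H(x)$), rewrite $F\{A,B\}$ as the evaluation of this matrix-valued function at $A$, and apply Theorem~\ref{theorem:crouzeixpalencia} twice before shrinking the domains to $W(A)\times W(B)$. Your write-up is slightly more explicit about the regularity of $H$ and the domain-approximation step, but the argument is the same.
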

\begin{proof}
For $x \in \overline{\Omega}_A$, we let $F_x(y):=F(x,y)$. Inserting
\[
 F_x(B) = \frac{1}{2\pi \mathrm{i}} \oint_{\partial \Omega_B} F(x,y) \otimes (yI-B)^{-1}  \,\text{d}y
\]
into~\eqref{eq:bivariatematrixvalued} yields
\[
 F\{A,B\} = \frac{1}{2\pi \mathrm{i}} \oint_{\partial \Omega_A} F_x(B) \otimes (xI-A)^{-1}\,\text{d}x.
\]
This allows us to view $F\{A,B\}$ as the evaluation of the matrix valued function
$F_B(x):=F_x(B)$ in $A$, that is, $F\{A,B\}=F_B(A)$. Using Theorem~\ref{theorem:crouzeixpalencia} twice gives
\begin{align*}
 \|f\{A,B\}\|_2 & \le (1+\sqrt{2}) \sup_{x\in \Omega_A} \|F_B(x)\|_2  = (1+\sqrt{2}) \sup_{x\in \Omega_A} \|f_x(B)\|_2 \\ & \le (1+\sqrt{2})^2 \sup_{x\in \Omega_A \atop y\in \Omega_B} \|f(x,y)\|_2.
\end{align*}
As this inequality holds for any $\Omega_A,\Omega_B$  containing $W(A),W(B)$, the statement of the theorem follows by continuity.
\end{proof}

\section{Extension to multivariate functions} \label{sec:multivariate}

The result of Theorem~\ref{theorem:main} extends without difficulty to functions in more than two variables. Let 
$\Omega_i \subset \C$ be smooth, bounded, and convex domains for $i = 1,\ldots,d$ and consider a function $F(x_1,\ldots,x_d) \in \C^{m\times p}$ that is analytic in  $\Omega_1\times\cdots \times \Omega_d$. For matrices $A_i \in\C^{n_i \times n_i}$ with eigenvalues contained in $\Omega_i$, we define recursively
\[
 F\{A_1,\ldots,A_d\} = \frac{1}{2\pi \mathrm{i}} \oint_{\partial \Omega_1} F_{x_1}\{A_2,\ldots,A_d\} \otimes (x_1 I-A)^{-1}\,\text{d}x_1,
\]
where $F_{x_1}(x_2,\ldots,x_d):=F(x_1,x_2,\ldots,x_d)$. Applying the technique from the proof of Theorem~\ref{theorem:main} recursively, we obtain
\begin{equation} \label{eq:multid}
  \|F\{A_1,\ldots,A_d\}\|_2 \le (1+\sqrt{2})^d \|F\|_{W(A_1) \times \cdots \times W(A_d)}. %\sup\big\{ \|F(x_1,\ldots,x_d)\|_2:\, x_1 \in W(A_1),\, \ldots,\, x_d\in W(A_d)\big\},
\end{equation}
provided that $W(A_1)\subset \Omega_1, \ldots, W(A_d)\subset \Omega_d$.

A straightforward extension of the example from the introduction shows that the constant in~\eqref{eq:multid} must be at least $2^d$; hence, the exponential growth with respect to $d$ is unavoidable. On the other hand, the constant can be decreased to $(1+\sqrt{2})^{d-k}$ if one assumes that there are $k$ normal matrices among $A_1,\ldots,A_d$.

\section{Application to derivatives of matrix functions} \label{sec:derivatives}

The Fr\'echet derivative of the matrix function $f(A)$ is the linear operator 
$Df\{A\}: \C^{n \times n} \to \C^{n \times n}$ satisfying
\[
 f(A+\Delta) = f(A) + Df\{A\}(\Delta) + O(\|\Delta\|_2^2)
\]
for all $\Delta \in \C^{n\times n}$ of sufficiently small norm. The norm $\|Df\{A\}\|_2$ induced by the Frobenius norm is the (absolute) condition number of $f(A)$, an important quantity to assess the effect of perturbations (e.g., due to roundoff error) on matrix functions~\cite[Chap. 3]{Higham2008}.
Most existing bounds for $f(A)$ proceed via diagonalizing $A$ and inevitably involve the squared condition number of the eigenvector matrix; see, e.g.,~\cite[Theorem 3.15]{Higham2008}. A notable exception is Corollary 3.2 in~\cite{Deadman2016}, which derives a bound in terms of the pseudospectrum of $A$.
The following corollary presents a bound in terms of the maximum of the derivative on the numerical range.

\begin{corollary}
Let $A\in \C^{n\times n}$ and consider an analytic function $f: \Omega \to \C$ with $W(A) \subset \Omega$. Then
 \[
  \|Df\{A\}\|_2 \le (1+\sqrt{2})^2\, \|f^\prime\|_{W(A)}.
 \]
\end{corollary}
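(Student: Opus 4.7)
My plan is to recognize $Df\{A\}$ as a bivariate matrix function in the sense of Section~3, applied to $A$ and $A^T$, and then invoke Theorem~\ref{theorem:main}. The natural candidate for the associated scalar function is the first divided difference $g(x,y) := (f(x)-f(y))/(x-y)$, extended by $g(x,x) := f'(x)$, which is jointly analytic on $\Omega \times \Omega$.

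The first step is to verify the identification $Df\{A\} = g\{A,A^T\}$. Starting from the standard resolvent representation
$$Df\{A\}(\Delta) = \frac{1}{2\pi\mathrm{i}}\oint_{\Gamma} f(z)(zI-A)^{-1}\Delta(zI-A)^{-1}\,\mathrm{d}z,$$
vectorization turns $Df\{A\}$ into an $n^2 \times n^2$ matrix $L$ whose spectral norm coincides with $\|Df\{A\}\|_2$, the Frobenius-induced operator norm. On the other hand, inserting $g(x,y) = \frac{1}{2\pi\mathrm{i}}\oint_\Gamma f(z)/((z-x)(z-y))\,\mathrm{d}z$ into the defining double contour integral of $g\{A,A^T\}$ and using the residue identity $\frac{1}{2\pi\mathrm{i}}\oint_\Gamma (xI-A)^{-1}/(z-x)\,\mathrm{d}x = (zI-A)^{-1}$ (valid when $\Gamma$ encloses the eigenvalues of $A$ but not $z$), a Fubini-type argument collapses the double integral to $L$. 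The second argument must be taken as $A^T$ rather than $A$ because the paper's convention gives $g\{A,B\}(X) = AXB^T$ for $g(x,y) = xy$, and thus $g\{A,A^T\}(\Delta) = A\Delta A$ matches the Fréchet derivative of $x \mapsto x^2$.

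Granted this identification, Theorem~\ref{theorem:main} gives
$$\|Df\{A\}\|_2 = \|g\{A,A^T\}\|_2 \le (1+\sqrt{2})^2\,\|g\|_{W(A)\times W(A^T)}.$$
Since $W(A^T) = W(A)$ (a standard property of the numerical range, obtained by the substitution $v \mapsto \bar v$), the right-hand side equals $(1+\sqrt{2})^2\|g\|_{W(A)\times W(A)}$. Finally, by the Toeplitz--Hausdorff theorem $W(A)$ is convex, so for any $x,y \in W(A)$ the segment joining them lies in $W(A)$, and the mean value formula $g(x,y) = \int_0^1 f'((1-t)x+ty)\,\mathrm{d}t$ yields $|g(x,y)| \le \|f'\|_{W(A)}$, completing the proof.

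The main point requiring care is the identification in the first step: one has to match the transpose convention in the paper's definition of $F\{A,B\}$ and verify that the divided difference $g$ extends analytically across the diagonal $x=y$, so that $g\{A,A^T\}$ is well defined in the sense of~\eqref{def:bivariatefunction}. Once this bookkeeping is done, the remainder is a clean combination of Theorem~\ref{theorem:main} with the convexity of the numerical range.
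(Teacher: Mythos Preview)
Your proposal is correct and follows essentially the same approach as the paper: identify $Df\{A\}$ with the bivariate matrix function associated to the first divided difference evaluated at $(A,A^T)$, apply Theorem~\ref{theorem:main}, and then bound the divided difference on $W(A)\times W(A)$ using convexity of the numerical range. Your version supplies more detail on the identification step (via the resolvent formula and a Fubini argument) and makes the equality $W(A^T)=W(A)$ explicit, whereas the paper simply cites~\cite{Kressner2014bivariate} for the identification; your integral form $g(x,y)=\int_0^1 f'((1-t)x+ty)\,\mathrm{d}t$ of the convexity bound is equivalent to the paper's line-integral estimate.
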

\begin{proof}
The divided difference
\[
f^{[1]}(x,y):=f[x,y] = \left\{
\begin{array}{ll}
 \frac{f(x)-f(y)}{x-y}, & \text{for } x\not=y, \\
 f^\prime(x), & \text{for } x=y, \\
\end{array}\right.
\]
is analytic in $\Omega \times \Omega$. Moreover, as explained in~\cite{Kressner2014bivariate}, the corresponding bivariate matrix function $f^{[1]}\{A,A^T\}$ is the (canonical) matrix representation of the linear operator $Df\{A\}$. In turn, Theorem~\ref{theorem:main} gives
\[
 \|Df\{A\}\|_2 = \|f^{[1]}\{A,A^T\}\|_2 \le (1+\sqrt{2})^2 \|f^{[1]}\|_{W(A)\times W(A)}.
\]
To simplify the last term, we note that for arbitrary $x,y\in W(A)$, the line segment $\gamma$  from $x$ to $y$ is contained in $W(A)$ and thus
\[
 |f(y)-f(x)| = \Big| \int_\gamma f^\prime(z) dz \Big| \le |y-x| \max_{z\in\gamma}|f^\prime(z)|.
\]
Hence, $|f^{[1]}(x,y)| \le \sup_{z\in W(A)} |f^\prime(z)|$ for all $x,y\in W(A)$, which completes the proof.
\end{proof}

\section{Application to convergence analysis of Krylov subspace methods} \label{sec:krylov}

As nicely explained in~\cite{BadeaBeckermann2013}, norm bounds of the form~\eqref{eq:scalarcrouzeixpalencia} are an essential ingredient in deriving error bounds for approximations of matrix functions. Theorem~\ref{theorem:main} can serve the same purpose for bivariate matrix functions. To illustrate this, we consider the Arnoldi method from~\cite{Kressner2019bivariate} for approximating the matrix-vector product $f\{A,B\}c$, where $c$ is the vectorization of a rank-one matrix: $c = \text{vec}(c_A c_B^T)$ with $c_A \in \C^m$, $c_B \in \C^n$.

The method from~\cite{Kressner2019bivariate} applies $k$ steps of the standard Arnoldi process to generate an orthonormal basis $U_k$ of the 
$k$-dimensional Krylov subspaces $\calK_k(A,c_A)$ and, similarly, an 
orthonormal bases $V_\ell$ of 
$\calK_\ell(B,c_B)$. It returns the approximation 
\begin{equation} \label{eq:arnoldiapprox}
 x_{k,\ell} = ( V_\ell \otimes U_k) y_{k,\ell},
\end{equation}
which involves the reduced bivariate matrix function
\[
 y_{k,\ell} = f\big\{U_k^\ast A U_k ,V_\ell^\ast B V_\ell \big\}
 ( V_\ell \otimes U_k)^\ast c.
\]
As explained in~\cite{Kressner2019bivariate}, this general framework unifies existing Krylov subspace methods for various types of matrix equations and the Fr\'echet derivative. Theorem~\ref{theorem:main} allows us to link the approximation error for $y_{k,\ell}$ to a (bivariate) polynomial approximation problem.
\begin{corollary} \label{theorem:convergence}
Consider an analytic function $f:\Omega_A\times\Omega_B\to \C$
with $W(A) \subset \Omega_A$ and $W(B) \subset \Omega_B$. Then the approximation~\eqref{eq:arnoldiapprox} returned by the Arnoldi method satisfies
the error bound
\[
 \|f\{A,B\} c  - x_{k,\ell} \|_2 \le 2 (1+\sqrt{2})^2 \|c\|_2 \inf_{p \in \Pi_{k-1,\ell-1}} \| f-p \|_{W(A)\times W(B)},
\]
where $\Pi_{k,\ell}$ denote the set of all bivariate polynomials of degree at most $(k,\ell)$.
\end{corollary}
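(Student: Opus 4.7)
The plan is to follow the classical template for Krylov convergence bounds: use the polynomial exactness of the method to replace $f$ by $f-p$ on both sides, then apply Theorem~\ref{theorem:main} to each piece and take the infimum over $p$. Concretely, I would first establish the exactness property: for every bivariate polynomial $p \in \Pi_{k-1,\ell-1}$,
\[
 p\{A,B\} c = (V_\ell \otimes U_k)\, p\{U_k^* A U_k,\, V_\ell^* B V_\ell\}\, (V_\ell \otimes U_k)^* c.
\]
Because $c = c_B \otimes c_A$, a single monomial $x^i y^j$ with $i\le k-1$, $j\le \ell-1$ produces $p\{A,B\}c = (B^j c_B) \otimes (A^i c_A)$, which lies in $\calK_\ell(B,c_B)\otimes \calK_k(A,c_A) = \text{range}(V_\ell \otimes U_k)$, and on this subspace the projector $(V_\ell \otimes U_k)(V_\ell \otimes U_k)^*$ acts as the identity. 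Standard Arnoldi identities $U_k^* A U_k U_k^* A^{i-1} c_A = U_k^* A^i c_A$ for $i\le k-1$ (and analogously for $B,V_\ell$) then allow the projected monomial to reproduce the same vector, yielding the claimed exactness by linearity.

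Next, I would use this to rewrite the error. Since both $f\{A,B\}c$ and $x_{k,\ell}$ are linear in $f$, and since $p\{A,B\}c - (V_\ell \otimes U_k) p\{U_k^* A U_k, V_\ell^* B V_\ell\}(V_\ell \otimes U_k)^* c = 0$ by exactness, for any $p \in \Pi_{k-1,\ell-1}$
\[
 f\{A,B\}c - x_{k,\ell} = (f-p)\{A,B\}c - (V_\ell \otimes U_k)\,(f-p)\{U_k^* A U_k, V_\ell^* B V_\ell\}\,(V_\ell \otimes U_k)^* c.
\]
Applying the triangle inequality and the fact that $V_\ell \otimes U_k$ has orthonormal columns, the error is bounded by
\[
 \bigl(\|(f-p)\{A,B\}\|_2 + \|(f-p)\{U_k^* A U_k, V_\ell^* B V_\ell\}\|_2\bigr)\,\|c\|_2.
\]

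For each term I would invoke Theorem~\ref{theorem:main}. For the first, $\|(f-p)\{A,B\}\|_2 \le (1+\sqrt{2})^2 \|f-p\|_{W(A)\times W(B)}$ directly. For the second, I rely on the inclusions $W(U_k^* A U_k) \subseteq W(A)$ and $W(V_\ell^* B V_\ell) \subseteq W(B)$ (because $U_k,V_\ell$ have orthonormal columns, any unit vector $w$ gives $(U_k w)^* A (U_k w)$ with $\|U_k w\|_2 = 1$), so Theorem~\ref{theorem:main} again yields $(1+\sqrt{2})^2 \|f-p\|_{W(A)\times W(B)}$. Summing gives the factor $2(1+\sqrt{2})^2$, and taking the infimum over $p \in \Pi_{k-1,\ell-1}$ finishes the proof.

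The only nontrivial step is the exactness lemma; everything else is bookkeeping. The subtle point there is that exactness relies on $c$ being the vectorization of a rank-one matrix, so that applying a monomial $B^j \otimes A^i$ keeps the result in the tensor product of the two univariate Krylov subspaces. Once that observation is made, the reduction to the univariate Arnoldi exactness for each factor is routine.
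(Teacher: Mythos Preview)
Your proposal is correct and follows essentially the same approach as the paper: derive the error decomposition
\[
 \|f\{A,B\}c - x_{k,\ell}\|_2 \le \bigl(\|(f-p)\{A,B\}\|_2 + \|(f-p)\{U_k^* A U_k, V_\ell^* B V_\ell\}\|_2\bigr)\|c\|_2,
\]
apply Theorem~\ref{theorem:main} to each term, and use the numerical-range inclusions for the compressions. The only difference is that the paper imports this decomposition directly from~\cite[Theorem~4.3]{Kressner2019bivariate}, whereas you reconstruct it by spelling out the polynomial exactness lemma; your sketch of that lemma is correct.
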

\begin{proof}
The proof of Theorem~4.3 in~\cite{Kressner2019bivariate} implies that
\[
 \|f\{A,B\} c  - x_{k,\ell} \|_F \le \big( \| e\{A,B \} \|_2 + \| e\{ U_k^\ast A U_k ,V_\ell^\ast B V_\ell \}\|_2  \big) \|c\|_2
 \]
 with $e = f-p$ for arbitrary $p \in \Pi_{k,\ell}$.
 Applying Theorem~\ref{theorem:main} to 
 $\| e\{A,B \} \|_2$, $\| e\{ U_k^\ast A U_k ,V_\ell^\ast B V_\ell \}\|_2$ and noting that
 $W(U_k^\ast A U_k) \subset W(A)$, $W(V_\ell^\ast B V_\ell) \subset W(B)$ concludes the proof.
\end{proof}

\bibliographystyle{siamplain}
%\bibliography{matrixfunctions}

\bibliography{anchp}

\end{document}